\newtheorem{theorem}{Theorem}[section]
\newtheorem{lemma}[theorem]{Lemma}
\newtheorem{proposition}[theorem]{Proposition}
\newtheorem{corollary}[theorem]{Corollary}
\theoremstyle{definition}
\theoremstyle{remark}
\def\@author#1{\g@addto@macro\elsauthors{\normalsize%
    \def\baselinestretch{1}%
    \upshape\authorsep#1\unskip\textsuperscript{%
      \ifx\@fnmark\@empty\else\unskip\sep\@fnmark\let\sep=,\fi
      \ifx\@corref\@empty\else\unskip\sep\@corref\let\sep=,\fi
      }%
    \def\authorsep{\unskip,\space}%
    \global\let\@fnmark\@empty
    \global\let\@corref\@empty  %% Added
    \global\let\sep\@empty}%
    \@eadauthor={#1}
}
\begin{document}

\begin{frontmatter}

%% Title, authors and addresses

%% use the tnoteref command within \title for footnotes;
%% use the tnotetext command for theassociated footnote;
%% use the fnref command within \author or \address for footnotes;
%% use the fntext command for theassociated footnote;
%% use the corref command within \author for corresponding author footnotes;
%% use the cortext command for theassociated footnote;
%% use the ead command for the email address,
%% and the form \ead[url] for the home page:
%% \title{Title\tnoteref{label1}}
%% \tnotetext[label1]{}
%% \author{Name\corref{cor1}\fnref{label2}}
%% \ead{email address}
%% \ead[url]{home page}
%% \fntext[label2]{}
%% \cortext[cor1]{}
%% \address{Address\fnref{label3}}
%% \fntext[label3]{}

\title{Explicit expressions for a certain class of Appell polynomials. A probabilistic approach}

%% use optional labels to link authors explicitly to addresses:
%% \author[label1,label2]{}
%% \address[label1]{}
%% \address[label2]{}

\author{Jos\'{e} A. Adell\corref{a1}\fnref{thanks}}
\ead{adell@unizar.es}

\author{Alberto Lekuona\fnref{thanks}}
\ead{lekuona@unizar.es}

\cortext[a1]{Corresponding author.}
\fntext[thanks]{The authors are partially supported by Research Projects DGA (E-64), MTM2015-67006-P, and by FEDER funds.}

\address{Departamento de M\'{e}todos Estad\'{\i}sticos, Facultad de
Ciencias, Universidad de Zaragoza, 50009 Zaragoza (Spain)}

\begin{abstract}
We consider the class $\mathcal{E}_t(Y)$ of Appell polynomials whose generating function is given by means of a real power $t$ of the moment generating function of a certain random variable $Y$. For such polynomials, we obtain explicit expressions depending on the moments of $Y$. It turns out that various kinds of generalizations of Bernoulli and Apostol-Euler polynomials belong to $\mathcal{E}_t(Y)$ and can be written and investigated in a unified way. In particular, explicit expression for such polynomials can be given in terms of suitable probabilistic generalizations of the Stirling numbers of the second kind.
\end{abstract}

\begin{keyword}
Appell polynomials \sep generating functions \sep generalized Bernoulli polynomials of real order \sep generalized Apostol-Euler polynomials of real order \sep generalized Stirling numbers of the second kind.

MSC 2010: primary 11B68;  secondary 33C45; 60E05
\end{keyword}

\end{frontmatter}

%% \linenumbers

%% main text
\section{Introduction and main result}\label{s1}

The Appell polynomials have many important applications in different areas of pure and applied mathematics. Paradigmatic examples of such polynomials are the classical Bernoulli and Euler polynomials, as well as their various kinds of generalizations. Good general references for this subject are Erd\'{e}lyi \textit{et al.} \cite{ErdMagObeTri1955}, Graham \textit{et al.} \cite{GraKnuPat1989}, Abramowitz and Stegun \cite{AbrSte1992}, and Olver \textit{et al.} \cite{OlvLozBoiCla2010}, among others.

An interesting problem, widely considered in the literature, is to find explicit expressions for the different examples of Appell polynomials. As far as we know, this problem has usually been carried out by taking into account the particular form of the Appell polynomials in each case. For instance, Todorov \cite{Tod1985} and Srivastava and Todorov \cite{SriTod1988} obtained an explicit formula for generalized Bernoulli polynomials of complex order in terms of the Stirling numbers of the second kind (a similar, but different formula for the Bernoulli numbers has been provided by Guo and Qi \cite{GuoQi2014}). Luo \cite{Luo2006} gave an explicit representation of Apostol-Euler polynomials of real order in terms of Gaussian hypergeometric functions. Other identities for higher order Apostol-Genocchi polynomials can be found in Luo and Srivastava \cite{LuoSri2011} (see also Wuyungaowa \cite{Wuy2013} for identities of products of Genocchi numbers).

Let $\mathds{N}$ be the set of positive integers and $\mathds{N}_0=\mathds{N}\cup \{0\}$. Unless otherwise specified, we assume from now on that $n\in \mathds{N}_0$, $x\in \mathds{R}$, and $z\in \mathds{C}$ with $|z|\leq r$, where $r>0$ may change from line to line. Denote by $\mathcal{G}$ the set of real sequences $\boldsymbol{u} =(u_n)_{n\geq 0}$ such that $u_0\neq 0$ and
\begin{equation*}
\sum_{n=0}^\infty |u_n|\dfrac{r^n}{n!}<\infty ,
\end{equation*}
for some radius $r>0$. If $\boldsymbol{u}\in \mathcal{G}$, we denote its generating function as
\begin{equation*}
G(\boldsymbol{u},z)=\sum_{n=0}^\infty u_n \dfrac{z^n}{n!}.
\end{equation*}

Let $A(x)=(A_n(x))_{n\geq 0}$ be a sequence of polynomials such that $A(0)=(A_n(0))_{n\geq 0}\in \mathcal{G}$. Recall that $A(x)$ is called an Appell sequence if one of the following equivalent conditions is satisfied
\begin{equation}\label{eq1.1}
A'_n(x)=nA_{n-1}(x),\quad n\in \mathds{N},
\end{equation}
\begin{equation}\label{eq1.2}
A_n(x)=\sum_{k=0}^n \binom{n}{k}A_k(0)x^{n-k},
\end{equation}
or
\begin{equation}\label{eq1.3}
G(A(x),z)=G(A(0),z)e^{xz}.
\end{equation}
Finally, let $Y$ be a random variable such that
\begin{equation}\label{eq1.4}
r\mathds{E}|Y|e^{r|Y|}<1,
\end{equation}
where $\mathds{E}$ stands for mathematical expectation.

In a recent paper, Ta \cite{Ta2015} has introduced the set $\mathcal{R}(Y)$ of Appell sequences $A(x)$ such that
\begin{equation}\label{eq1.5}
G(A(x),z)= \dfrac{e^{xz}}{\mathds{E}e^{zY}}.
\end{equation}
As we will see in inequality \eqref{eq2.11star} below, condition \eqref{eq1.4} implies that formula \eqref{eq1.5} is well defined. The set $\mathcal{R}(Y)$ contains the generalized Bernoulli and Euler polynomials of integer order, as well as the Hermite and the Miller-Lee polynomials, among others. For such polynomials, Ta \cite{Ta2015} gives representations in terms of moments of complex valued random variables. However, important special examples usually considered in the literature, such as the generalized Bernoulli and Apostol-Euler  polynomials of real order fall out of the scope of definition \eqref{eq1.5}.

The aim of this paper is twofold. On the one hand, to enlarge the set $\mathcal{R}(Y)$ by considering the set $\mathcal{E}_t(Y)\supseteq \mathcal{R}(Y)$ of Appell sequences $A(t;x)$ whose generating function is given by
\begin{equation}\label{eq1.6}
G(A(t;x),z)= \dfrac{e^{xz}}{(\mathds{E}e^{zY})^t},
\end{equation}
for some real $t$. We obtain a simple explicit expression of any Appell sequence $A(t;x)\in \mathcal{E}_t(Y)$ in terms of the moments of the random variable $Y$ (see Theorem~\ref{th1} and the comments following it).

On the other hand, the different kinds of generalizations of the Bernoulli and Apostol-Euler polynomials have traditionally been investigated following separate ways. Since all of these kinds of polynomials belong to the set $\mathcal{E}_t(Y)$, we show that a unified treatment is possible by introducing a probabilistic generalization of the Stirling numbers of the second kind. In fact, we obtain explicit expressions of any Appell sequence $A(t,x)\in \mathcal{E}_t(Y)$ in terms of such numbers (see Theorem~\ref{th1}).

Throughout this paper, $Y$ is a random variable satisfying \eqref{eq1.4} and $(Y_j)_{j\geq 1}$ is a sequence of independent copies of $Y$. We denote by
\begin{equation}\label{eq1.7}
S_k=Y_1+\cdots +Y_k,\quad k\in \mathds{N},\qquad S_0=0.
\end{equation}

The Stirling numbers of the second kind $S(n,m)$, $n,m\in \mathds{N}_0$, $m\leq n$, can be defined in various equivalent ways (see, for instance, Abramowitz and Stegun \cite[p. 824]{AbrSte1992}). We recall here the following definition
\begin{equation*}
    S(n,m)=\dfrac{1}{m!}\sum_{k=0}^m \binom{m}{k}(-1)^{m-k}k^n.
\end{equation*}
Different generalizations of these numbers have been considered in the literature (see, for instance, Hsu and Shiue \cite{HsuShi1998}, Mez\H{o} \cite{Mez2010}, Luo and Srivastava \cite{LuoSri2011} and Wuyungaowa \cite{Wuy2013}). Here, we consider the following probabilistic generalization
\begin{equation}\label{eq1.1star}
    S_Y(n,m)=\dfrac{1}{m!}\sum_{k=0}^m \binom{m}{k}(-1)^{m-k}\mathds{E}S_k^n,\quad n,m\in \mathds{N}_0,\quad m\leq n.
\end{equation}
Observe that if $Y=1$, then $S_1(n,m)=S(n,m)$. Some properties of $S_Y(n,m)$ are given in Section~\ref{s2}.

With these ingredients, we enunciate the main result of this paper.

\begin{theorem}\label{th1}
Let $t\in \mathds{R}$ and $A(t;x)\in \mathcal{E}_t(Y)$. Then,
\begin{equation}\label{eq1.8}
A_n(t;0)=\sum_{m=0}^n \binom{-t}{m}m! S_Y(n,m)=\sum_{k=0}^n \binom{-t}{k}\binom{n+t}{n-k} \mathds{E}S_k^n.
\end{equation}
\end{theorem}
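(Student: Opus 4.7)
The starting point is to set $x=0$ in \eqref{eq1.6}, which gives $G(A(t;0),z)=(\mathds{E}e^{zY})^{-t}$, so $A_{n}(t;0)$ is simply $n!$ times the coefficient of $z^{n}$ in the Taylor expansion of $(\mathds{E}e^{zY})^{-t}$ about $z=0$. Condition \eqref{eq1.4} makes $\mathds{E}e^{zY}$ analytic and arbitrarily close to $1$ on a neighborhood of the origin (this is the content of the inequality \eqref{eq2.11star}), so the negative real power is itself analytic at $0$ and these coefficients are well defined.

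The plan is to expand this generating function in two nested stages. First, write $\mathds{E}e^{zY}=1+(\mathds{E}e^{zY}-1)$ and invoke the generalized binomial series,
\begin{equation*}
(\mathds{E}e^{zY})^{-t}=\sum_{m=0}^{\infty}\binom{-t}{m}\bigl(\mathds{E}e^{zY}-1\bigr)^{m}.
\end{equation*}
Because the $Y_{j}$ are i.i.d.\ copies of $Y$, independence yields $(\mathds{E}e^{zY})^{k}=\mathds{E}e^{zS_{k}}$, so the binomial theorem gives
\begin{equation*}
\bigl(\mathds{E}e^{zY}-1\bigr)^{m}=\sum_{k=0}^{m}\binom{m}{k}(-1)^{m-k}\mathds{E}e^{zS_{k}}.
\end{equation*}
Substituting $\mathds{E}e^{zS_{k}}=\sum_{n\ge 0}\mathds{E}S_{k}^{n}\,z^{n}/n!$ and collecting the coefficient of $z^{n}/n!$ produces exactly $\binom{-t}{m}\cdot m!\,S_{Y}(n,m)$ as the $m$-th summand. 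The crucial observation is that $\mathds{E}e^{zY}-1$ vanishes to order $z$, so $(\mathds{E}e^{zY}-1)^{m}$ vanishes to order $z^{m}$; consequently the $z^{n}/n!$-coefficient of the $m$-th term is zero whenever $m>n$. This both truncates the outer sum at $m=n$ and extends the definition \eqref{eq1.1star} naturally by $S_{Y}(n,m)=0$ for $m>n$, thereby giving the first equality in \eqref{eq1.8}.

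For the second equality, I would swap the $m$- and $k$-summations (legitimate because the double sum is now finite) and reduce matters to the purely algebraic identity
\begin{equation*}
\sum_{m=k}^{n}\binom{-t}{m}\binom{m}{k}(-1)^{m-k}=\binom{-t}{k}\binom{n+t}{n-k}.
\end{equation*}
Applying $\binom{-t}{m}\binom{m}{k}=\binom{-t}{k}\binom{-t-k}{m-k}$ factors out $\binom{-t}{k}$; the sign-reversal $(-1)^{j}\binom{-t-k}{j}=\binom{t+k+j-1}{j}$ followed by the parallel summation (hockey-stick) identity $\sum_{j=0}^{n-k}\binom{t+k+j-1}{j}=\binom{t+n}{n-k}$ then closes the computation.

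The one place requiring care is the justification of the rearrangements: analyticity from \eqref{eq1.4} together with \eqref{eq2.11star} ensures absolute convergence of the double series for $|z|$ small enough, after which the coefficient comparisons are routine. The conceptual pivot of the whole argument is the vanishing-order remark in the second paragraph: without it the sum over $m$ would be infinite, $S_{Y}(n,m)$ for $m>n$ would have no natural meaning, and the interchange of summations used in the second equality would not be immediate.
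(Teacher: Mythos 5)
Your proof is correct and follows essentially the same route as the paper's: the binomial expansion of $(\mathds{E}e^{zY})^{-t}$ in powers of $\mathds{E}e^{zY}-1$, identification of the coefficient of $z^n/n!$ in $(\mathds{E}e^{zY}-1)^m$ with $m!\,S_Y(n,m)$, and the trinomial-revision plus hockey-stick computation for the second equality. The only cosmetic difference is that you re-derive the needed generating-function fact \eqref{eq2.9star} directly from $(\mathds{E}e^{zY})^k=\mathds{E}e^{zS_k}$ and the order-of-vanishing observation, where the paper instead cites Proposition~\ref{prop2.2}.
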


In view of \eqref{eq1.2}, Theorem~\ref{th1} gives us two explicit expressions for any Appell sequence $A(t;x)\in \mathcal{E}_t(Y)$. The first one in terms of the numbers $S_Y(n,m)$. The second one in terms of the moments of the random variable $Y$. In fact, denote by
\begin{equation}\label{eq1.10}
\mu_j=\mathds{E}Y^j,\quad j\in \mathds{N}_0,
\end{equation}
the $j$th moment of $Y$. Observe that we have from \eqref{eq1.7}
\begin{equation}\label{eq1.11}
\mathds{E}S_k^n=\sum_{j_1+\cdots +j_k=n} \dfrac{n!}{j_1!\cdots j_k!}\, \mu_{j_1}\cdots \mu_{j_k},\qquad k,n\in \mathds{N}_0.
\end{equation}

The proof of Theorem~\ref{th1}, in which probabilistic methods play an important role, is given in Section~\ref{s3}. Applications of this theorem to different generalizations of Bernoulli and Apostol-Euler polynomials are deferred to Section~\ref{s4}.

\section{The numbers $S_Y(n,m)$}\label{s2}

Let $U$ be a random variable uniformly distributed on $[0,1]$ and independent of $Y$. Also, let $(U_j)_{j\geq 1}$ be a sequence of independent copies of $U$, which are assumed to be independent of $(Y_j)_{j\geq 1}$, as given in \eqref{eq1.7}.

On the other hand, let $m\in \mathds{N}$ and $\alpha_1,\ldots ,\alpha_m \in \mathds{R}$. Denote by
\begin{equation*}
I_m(k)=\left \{(i_1,\ldots ,i_k)\in \{1,\ldots ,m\}:\ i_r\neq i_s,\text{ if } r\neq s \right \}, \quad k=1,\ldots ,m.
\end{equation*}
If $f:\mathds{R}\to \mathds{R}$ is an arbitrary function, we define
\begin{equation}\label{eq2.14}
\Delta^m_{\alpha_1,\ldots, \alpha_m} f(x)= (-1)^mf(x)+\sum_{k=1}^m (-1)^{m-k}\sum_{I_m(k)}f(x+\alpha_{i_1}+\cdots +\alpha_{i_k}).
\end{equation}
Observe that if $\alpha_1=\cdots =\alpha_m=\alpha$, then
\begin{equation*}
\Delta_\alpha^m f(x):=\Delta^m_{\alpha ,\ldots, \alpha} f(x)=\sum_{k=0}^m \binom{m}{k}(-1)^{m-k}f(x+\alpha k)
\end{equation*}
is the $m$th difference of $f$ at step $\alpha$. It has been shown in \cite[Lemma 7.2]{AdeLek2017.2} that if $f$ is $m$ times differentiable, then
\begin{equation}\label{eq2.15}
\Delta^m_{\alpha_1,\ldots ,\alpha_m} f(x)=\alpha_1 \cdots \alpha_m \mathds{E}f^{(m)}(x+\alpha_1 U_1+\cdots +\alpha_m U_m).
\end{equation}
This implies that if $p_n(y)$ is a polynomial of exact degree $n$, then
\begin{equation}\label{eq2.6star}
\Delta_{\alpha_1,\ldots, \alpha_m}p_n(x)=0,\quad m=n+1,n+2,\ldots
\end{equation}

The generalized $m$th differences of $f$ defined in \eqref{eq2.14} are actually iterates of the difference operator
\begin{equation*}
    \Delta_\alpha^1 f(x)=f(x+\alpha)-f(x),\quad \alpha \in \mathds{R}.
\end{equation*}
Actually, it can be shown by induction that
\begin{equation*}
    \Delta_{\alpha_1,\cdots, \alpha_m}^mf(x)=\left ( \Delta_{\alpha_1}^1 \circ \cdots \circ \Delta_{\alpha_m}^1 \right )f(x),\quad m\in \mathds{N}.
\end{equation*}
Such iterates have been considered in Mrowiec \textit{et al.} \cite{MroRajWas2017} in connection with Wright-convex functions of order $m$. With a different notation, these generalized difference operators have been used in Dilcher and Vignat \cite{DilVig2016} in connection with convolution identities for Bernoulli and Euler polynomials. The following auxiliary result, which has an independent interest, will be crucial to give various equivalent definitions of the numbers $S_Y(n,m)$. We make use of the conventions
\begin{equation}\label{eq2.7star}
\Delta_\phi^0 f(x)=f(x),\quad \sum_{k=1}^0 x_k=0,\quad \prod_{k=1}^0 x_k=1.
\end{equation}

\begin{lemma}\label{l2}
Let $m\in \mathds{N}_0$. If $f$ is $m$ times differentiable, then
\begin{equation*}
\begin{split}
  \mathds{E}\Delta^m_{Y_1,\ldots ,Y_m} f(x) &= \sum_{k=0}^m \binom{m}{k}(-1)^{m-k} \mathds{E}f(x+S_k)\\
  &= \mathds{E}Y_1\cdots Y_m f^{(m)}(x+Y_1U_1+\cdots +Y_mU_m),
\end{split}
\end{equation*}
provided that the preceding expectations exist.
\end{lemma}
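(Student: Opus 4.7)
The plan is to derive both equalities by routine applications of the definitions, together with formula \eqref{eq2.15} and Fubini's theorem. I first dispose of the case $m=0$, where by the conventions \eqref{eq2.7star} both sides reduce to $\mathds{E}f(x)$. So assume $m\in\mathds{N}$.

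For the first equality, I would apply the defining formula \eqref{eq2.14} with $\alpha_j=Y_j$ and take expectations. Since $(Y_j)_{j\geq 1}$ is a sequence of i.i.d.\ copies of $Y$, for any ordered distinct tuple $(i_1,\ldots,i_k)\in I_m(k)$ the sum $Y_{i_1}+\cdots+Y_{i_k}$ has the same distribution as $S_k = Y_1+\cdots+Y_k$, so $\mathds{E}f(x+Y_{i_1}+\cdots+Y_{i_k})=\mathds{E}f(x+S_k)$. Hence the inner sum over $I_m(k)$ produces $\binom{m}{k}\mathds{E}f(x+S_k)$ (the relevant count is the number of $k$-subsets of $\{1,\ldots,m\}$, which matches the specialization to $\Delta_\alpha^m$ stated just after \eqref{eq2.14}). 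Adjoining the term $k=0$ via $S_0=0$ produces the claimed binomial sum.

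For the second equality, I would condition on $\mathcal{F}=\sigma(Y_1,\ldots,Y_m)$. Because $(U_j)_{j\geq 1}$ is independent of $(Y_j)_{j\geq 1}$, formula \eqref{eq2.15}, valid pointwise in the deterministic parameters $(\alpha_1,\ldots,\alpha_m)$, can be applied to the realization $(Y_1,\ldots,Y_m)$ to give
\begin{equation*}
\mathds{E}\bigl[\Delta^m_{Y_1,\ldots ,Y_m} f(x)\,\big|\,\mathcal{F}\bigr] = Y_1\cdots Y_m\,\mathds{E}\bigl[f^{(m)}(x+Y_1U_1+\cdots+Y_mU_m)\,\big|\,\mathcal{F}\bigr].
\end{equation*}
Taking outer expectation and using the tower property yields the stated expression.

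The only real obstacle is the measure-theoretic bookkeeping: justifying the interchange of $\mathds{E}$ with the finite sum in \eqref{eq2.14} and the application of Fubini when conditioning. Both steps are covered by the standing hypothesis that the expectations in the statement exist, which makes every summand integrable and allows the iterated integrals to be taken in any order. No analytic subtleties beyond this appear.
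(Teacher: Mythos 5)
Your proof is correct and follows essentially the same route as the paper's: the $m=0$ case via the conventions \eqref{eq2.7star}, the first equality from the equidistribution of $Y_{i_1}+\cdots+Y_{i_k}$ and $S_k$, and the second by applying \eqref{eq2.15} with $\alpha_j$ replaced by $Y_j$ and then taking expectations. Your explicit conditioning on $\sigma(Y_1,\ldots,Y_m)$ merely spells out what the paper leaves implicit in ``taking expectations,'' and your reading of the sum over $I_m(k)$ as ranging over $k$-subsets is the one consistent with the paper's specialization to $\Delta_\alpha^m$.
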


\begin{proof}
By \eqref{eq2.7star}, the result is true for $m=0$. We thus assume that $m\in \mathds{N}$. Let $(i_1,\ldots ,i_k)\in I_m(k)$, $k=1,\ldots ,m$. By \eqref{eq1.7} and the fact that $(Y_j)_{j\geq 1}$ is a sequence of independent copies of $Y$, the random variables $S_k$ and $Y_{i_1}+\cdots +Y_{i_k}$ have the same law, thus implying that
\begin{equation}\label{eq2.15.2}
  \mathds{E} f(x+Y_{i_1}+\cdots +Y_{i_k})=\mathds{E}f (x+S_k).
\end{equation}
Therefore, the result follows from \eqref{eq2.14} and \eqref{eq2.15}, by replacing $\alpha_{i_j}$ by $Y_{i_j}$, $j=1,\ldots ,k$, and then taking expectations.
\end{proof}

\begin{proposition}\label{prop2.2}
Let $m, n\in \mathds{N}_0$, with $m\leq n$, and $I_n(y)=y^n$, $y\in \mathds{R}$. Then,
\begin{equation}\label{eq2.8star}
\begin{split}
S_Y(n,m)&=\dfrac{1}{m!}\mathds{E}\Delta_{Y_1,\ldots ,Y_m}^m I_n(0)= \dfrac{1}{m!}\sum_{k=0}^m \binom{m}{k}(-1)^{m-k}\mathds{E}S_k^n\\
&=\binom{n}{m}\mathds{E}Y_1\cdots Y_m \left ( Y_1U_1+\cdots +Y_mU_m\right )^{n-m}.
\end{split}
\end{equation}
Equivalently, the numbers $S_Y(n,m)$ are defined via its generating function
\begin{equation}\label{eq2.9star}
    \dfrac{\left (\mathds{E}e^{zY}-1 \right )^m}{m!}=\sum_{n=m}^\infty \dfrac{S_Y(n,m)}{n!}z^n.
\end{equation}
\end{proposition}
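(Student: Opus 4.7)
The plan is to deduce both halves of the proposition directly from Lemma~\ref{l2}, together with a short power-series manipulation for the generating function identity.

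For the chain of identities in \eqref{eq2.8star}, I would apply Lemma~\ref{l2} to $f=I_n$ at $x=0$. Since $I_n$ is infinitely differentiable with $I_n^{(m)}(y)=n(n-1)\cdots (n-m+1)\,y^{n-m}$ whenever $m\leq n$, the two expressions provided by the lemma become
\begin{equation*}
\sum_{k=0}^m \binom{m}{k}(-1)^{m-k}\mathds{E}S_k^n
=\frac{n!}{(n-m)!}\,\mathds{E}\bigl[Y_1\cdots Y_m\,(Y_1U_1+\cdots +Y_mU_m)^{n-m}\bigr].
\end{equation*}
Dividing through by $m!$ and invoking the definition \eqref{eq1.1star}, which is literally the middle term of \eqref{eq2.8star}, yields the whole chain at once, since $n!/(m!(n-m)!)=\binom{n}{m}$.

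For the generating function \eqref{eq2.9star}, I would expand by the binomial theorem and use independence of the $Y_j$ to rewrite
\begin{equation*}
\bigl(\mathds{E}e^{zY}-1\bigr)^m
=\sum_{k=0}^m \binom{m}{k}(-1)^{m-k}\bigl(\mathds{E}e^{zY}\bigr)^k
=\sum_{k=0}^m \binom{m}{k}(-1)^{m-k}\mathds{E}e^{zS_k}.
\end{equation*}
Expanding each $\mathds{E}e^{zS_k}=\sum_{n\geq 0}\mathds{E}S_k^n\,z^n/n!$, swapping the finite $k$-sum with the $n$-sum, and dividing by $m!$, the coefficient of $z^n/n!$ becomes $\tfrac{1}{m!}\sum_{k=0}^m \binom{m}{k}(-1)^{m-k}\mathds{E}S_k^n=S_Y(n,m)$. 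The contributions with $n<m$ vanish because $\Delta^m_{Y_1,\ldots,Y_m}$ annihilates any polynomial of degree $<m$ by \eqref{eq2.6star} applied pointwise before taking expectations; this is what forces the summation in \eqref{eq2.9star} to start at $n=m$.

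The only delicate point I anticipate is justifying the series manipulations: the analyticity of $z\mapsto \mathds{E}e^{zS_k}$ in a neighborhood of the origin and the legitimacy of interchanging expectation with summation when expanding the moment generating functions. Both follow from hypothesis \eqref{eq1.4} through the dominating estimate \eqref{eq2.11star} the authors mention in the introduction, which controls $\sum_n \mathds{E}|S_k|^n\,|z|^n/n!$ uniformly for $|z|$ in a small disk, so Fubini applies and the remaining argument is purely algebraic.
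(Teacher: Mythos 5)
Your treatment of the chain \eqref{eq2.8star} is exactly the paper's: apply Lemma~\ref{l2} with $f=I_n$ and $x=0$ and read off the three expressions. For the generating function \eqref{eq2.9star}, however, you take a genuinely different route. The paper does not expand $\left(\mathds{E}e^{zY}-1\right)^m$ by the binomial theorem; it writes each factor as $\mathds{E}\left(e^{zY_j}-1\right)=z\,\mathds{E}Y_je^{zU_jY_j}$ (the identity \eqref{eq2.10star}), multiplies the $m$ factors using independence to get $\frac{z^m}{m!}\mathds{E}Y_1\cdots Y_m e^{z(U_1Y_1+\cdots+U_mY_m)}$, and then expands the exponential. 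The prefactor $z^m$ makes the summation start at $n=m$ automatically, and the coefficient of $z^n/n!$ is recognized as $S_Y(n,m)$ via the \emph{third} expression in \eqref{eq2.8star}. Your binomial expansion instead lands directly on the \emph{middle} expression $\frac{1}{m!}\sum_{k=0}^m\binom{m}{k}(-1)^{m-k}\mathds{E}S_k^n$, at the cost of having to argue separately that the coefficients with $n<m$ vanish --- which you do correctly: by Lemma~\ref{l2} that coefficient equals $\frac{1}{m!}\mathds{E}Y_1\cdots Y_m I_n^{(m)}(Y_1U_1+\cdots+Y_mU_m)=0$ when $m>n$, which is the content of \eqref{eq2.6star}. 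Both arguments rest on the same integrability input from \eqref{eq1.4} and \eqref{eq2.11star} to justify the interchange of expectation and summation, so your Fubini remark is adequate. In short, your version is slightly more elementary for this step (no integral representation of $e^{zY}-1$ is needed), while the paper's version produces the $U$-representation of $S_Y(n,m)$ and the generating function in a single computation.
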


\begin{proof}
Choosing $x=0$ and $f(y)=I_n(y)$, $y\in \mathds{R}$, in Lemma~\ref{l2}, formula \eqref{eq2.8star} follows from definition \eqref{eq1.1star} and convention \eqref{eq2.7star}. On the other hand, $S_Y(n,0)=\delta_{n0}$, thus implying that \eqref{eq2.9star} is true for $m=0$. Assume that $m\in \mathds{N}$. Observe that
\begin{equation}\label{eq2.10star}
    \mathds{E}\left ( e^{zY}-1\right )=z\mathds{E}Ye^{zUY},
\end{equation}
which implies, by virtue of \eqref{eq1.4}, that
\begin{equation}\label{eq2.11star}
    \left | \mathds{E}\left ( e^{zY}-1\right )\right |\leq |z|\mathds{E}|Y|e^{|z||Y|}\leq r \mathds{E}|Y|e^{r|Y|}<1,\quad |z|\leq r.
\end{equation}
Assume that $|z|\leq r$. Using \eqref{eq2.10star} and the independence between the random variables involved, we get
\begin{equation*}
    \begin{split}
        \dfrac{\left ( \mathds{E}e^{zY}-1\right )^m}{m!}&=\dfrac{1}{m!}\mathds{E}\left ( e^{zY_1}-1\right )\cdots \mathds{E}\left ( e^{zY_m}-1\right ) \\
         & = \dfrac{z^m}{m!}\mathds{E}Y_1\cdots Y_m e^{z(U_1Y_1+\cdots +U_mY_m)}\\
         &=\dfrac{z^m}{m!}\sum_{k=0}^\infty \dfrac{z^k}{k!}\mathds{E}Y_1\cdots Y_m (U_1Y_1+\cdots +U_mY_m)^k\\
         & = \sum_{n=m}^\infty \dfrac{z^n}{n!}\binom{n}{m}\mathds{E}Y_1\cdots Y_m (U_1Y_1+\cdots +U_mY_m)^{n-m}.
     \end{split}
\end{equation*}
This, together with \eqref{eq2.8star}, shows \eqref{eq2.9star} and completes the proof.
\end{proof}

Setting $Y=1$ in \eqref{eq2.9star}, we obtain the well known generating function for the Stirling numbers of the second kind $S(n,m)$. In the same way, we have from the last equality in \eqref{eq2.8star}
\begin{equation}\label{eq2.12star}
    S(n,m)=\binom{n}{m}\mathds{E}(U_1+\cdots +U_m)^{n-m}.
\end{equation}
This probabilistic representation was already shown by Sun \cite{Sun2005} (see also \cite[formula (38)]{GraKnuPat1989}).

\section{Proof of Theorem~\ref{th1}}\label{s3}

Let $t\in \mathds{R}$ and $|z|\leq r$, as in \eqref{eq2.11star}. By \eqref{eq2.11star}, we can apply the binomial expansion and formula \eqref{eq2.9star} to obtain
\begin{equation*}
    \begin{split}
       & (\mathds{E}e^{zY})^{-t}=\left ( 1+\mathds{E}(e^{zY}-1)\right )^{-t}=\sum_{m=0}^\infty \binom{-t}{m}\left ( \mathds{E}e^{zY}-1\right )^m \\
         & =\sum_{m=0}^\infty \binom{-t}{m}m! \sum_{n=m}^\infty \dfrac{S_Y(n,m)}{n!}z^n=\sum_{n=0}^\infty \dfrac{z^n}{n!}\sum_{m=0}^n \binom{-t}{m}m! S_Y(n,m).
     \end{split}
\end{equation*}
This, in conjunction with \eqref{eq1.6}, shows the first equality in \eqref{eq1.8}.

By \eqref{eq2.8star}, we have
\begin{equation*}
    \begin{split}
        & \sum_{m=0}^n \binom{-t}{m}m! S_Y(n,m)=\sum_{m=0}^n \binom{-t}{m}\sum_{k=0}^m \binom{m}{k}(-1)^{m-k}\mathds{E}S_k^n\\
         & =\sum_{k=0}^n \mathds{E}S_k^n \sum_{m=k}^n \binom{-t}{m}\binom{m}{k}(-1)^{m-k}=\sum_{k=0}^n \binom{-t}{k}\binom{n+t}{n-k}\mathds{E}S_k^n,
     \end{split}
\end{equation*}
where the last equality can be checked by using the elementary combinatorial identity
\begin{equation*}
    \sum_{i=0}^p\binom{s+i}{i}=\binom{s+1+p}{p},\quad p\in \mathds{N}_0,\quad s\in \mathds{R}.
\end{equation*}
This shows the second equality in \eqref{eq1.8} and completes the proof.\hfill$\square$

\section{Examples}\label{s4}

In this section, we obtain explicit expression for various kinds of generalizations of the classical Bernoulli and Apostol-Euler polynomials in a unified way. This is possible because all of these polynomials belong to the class $\mathcal{E}_t(Y)$ for a suitable choice of the random variable $Y$ in each case.

Let $t\in \mathds{R}$ and $m\in \mathds{N}$. We consider the polynomials $B(t,m;x)$ defined by means of the generating function
\begin{equation}\label{eq3.24}
G(B(t,m;x),z)=\left ( \dfrac{z^m/m!}{e^z-\displaystyle\sum_{k=0}^{m-1}z^k/k!}\right )^t e^{xz}.
\end{equation}
For $t=1$, such polynomials have been considered in Bretti \textit{et al.} \cite{BreNatRic2004} and Khan and Riyasat \cite{KhaRiy2015}, among others. Important special cases are the generalized Bernoulli polynomials $B(t;x)$ of order $t$ and the classical Bernoulli polynomials $B(x)$ respectively given by
\begin{equation*}
B(t;x)=B(t,1;x),\qquad B(x)=B(1;x).
\end{equation*}

Let $\beta_m$ be a random variable having the beta density
\begin{equation}\label{eq3.25}
\rho_m(\theta)=m(1-\theta)^{m-1},\quad 0\leq \theta\leq 1.
\end{equation}
Observe that $\beta_1$ is uniformly distributed on $[0,1]$. Using the finite Taylor expansion with remainder in integral form (see also \cite[Lemma 7]{AdeBusQue2015}, we have
\begin{equation*}
e^z-\sum_{k=0}^{m-1} \dfrac{z^k}{k!}=\dfrac{z^m}{m!}\mathds{E}e^{z\beta_m}.
\end{equation*}
This implies that \eqref{eq3.24} can be rewritten as
\begin{equation}\label{eq3.26}
G(B(t,m;x),z)=\left (\mathds{E}e^{z\beta_m} \right )^{-t}e^{xz},
\end{equation}
which, thanks to \eqref{eq1.6}, implies that $B(t,m;x)\in \mathcal{E}_t(Y)$ with associated random variable $Y=\beta_m$. By \eqref{eq1.10} and \eqref{eq3.25}, the $j$th moment of the random variable $Y=\beta_m$ is given by
\begin{equation}\label{eq3.27}
  \mu_j=\mathds{E}\left (\beta_m \right )^j=m\int_0^1 \theta^j (1-\theta)^{m-1}\, d\theta= m\beta (j+1,m)=\dfrac{1}{\binom{m+j}{m}},
\end{equation}
for any $j\in \mathds{N}_0$. As follows from \eqref{eq1.11} and \eqref{eq3.27}, we have in the case at hand
\begin{equation}\label{eq4.6star}
    C(m,n,k):=\mathds{E}S_k^n=n!(m!)^k \sum_{j_1+\cdots +j_k=n}\dfrac{1}{(m+j_1)!\cdots (m+j_k)!}.
\end{equation}

As an immediate consequence of the second equality in \eqref{eq1.8}, we give the following.

\begin{corollary}\label{cor4.1}
Let $t\in \mathds{R}$ and $m\in \mathds{N}$. Then,
\begin{equation*}
    B_n(t,m;0)=\sum_{k=0}^n \binom{-t}{k}\binom{n+t}{n-k}C(m,n,k),
\end{equation*}
where $C(m,n,k)$ is defined in \eqref{eq4.6star}.
\end{corollary}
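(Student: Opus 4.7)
The plan is to apply Theorem~\ref{th1} directly, since all the necessary setup has already been laid out in the discussion preceding the corollary. Equation \eqref{eq3.26} identifies $B(t,m;x)$ as an element of $\mathcal{E}_t(Y)$ with associated random variable $Y=\beta_m$, and \eqref{eq3.27} gives the explicit moments $\mu_j = 1/\binom{m+j}{m}$. The only hypothesis that needs a brief check is the admissibility condition \eqref{eq1.4}: since $\beta_m$ takes values in $[0,1]$, we have $r\mathds{E}|\beta_m|e^{r|\beta_m|}\leq re^r$, which is less than $1$ for $r$ sufficiently small, so Theorem~\ref{th1} does apply with $Y=\beta_m$.

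Next I would simply invoke the second equality in \eqref{eq1.8}, which reads
\begin{equation*}
B_n(t,m;0) = \sum_{k=0}^n \binom{-t}{k}\binom{n+t}{n-k}\mathds{E}S_k^n,
\end{equation*}
where now $S_k = Y_1+\cdots+Y_k$ is a sum of $k$ independent copies of $\beta_m$. It remains only to identify $\mathds{E}S_k^n$ with $C(m,n,k)$, and this identification is exactly the computation already done in \eqref{eq4.6star}: plugging the moments \eqref{eq3.27} into the multinomial expansion \eqref{eq1.11} gives
\begin{equation*}
\mathds{E}S_k^n = n!(m!)^k\sum_{j_1+\cdots+j_k=n}\frac{1}{(m+j_1)!\cdots(m+j_k)!} = C(m,n,k).
\end{equation*}
Substituting this into the previous display yields the claimed identity.

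There is no real obstacle; the corollary is, as the text says, immediate. The substantive work has been done earlier — constructing $\beta_m$ so that \eqref{eq3.26} holds, computing its moments, and of course proving Theorem~\ref{th1} itself. Once those pieces are in place, the corollary is essentially a dictionary entry: name the random variable, verify \eqref{eq1.4}, quote the second formula in \eqref{eq1.8}, and read off $\mathds{E}S_k^n$ from \eqref{eq1.11}.
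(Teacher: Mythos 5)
Your proof is correct and follows exactly the route the paper intends: identify $Y=\beta_m$ via \eqref{eq3.26}, apply the second equality of Theorem~\ref{th1}, and recognize $\mathds{E}S_k^n=C(m,n,k)$ from \eqref{eq1.11} and \eqref{eq3.27}. Your explicit verification of condition \eqref{eq1.4} for $\beta_m$ is a small, welcome addition that the paper leaves implicit.
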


As follows from Theorem~\ref{th1}, a formula for $B_n(t,m;0)$ in terms of the generalized Stirling numbers $S_{\beta_m}(n,k)$ could be given. However, such a formula seems to be more involved than that in Corollary~\ref{cor4.1}.

For $m=1$, that is, for the generalized Bernoulli polynomials $B(t;x)$ of order $t$, we give the following representation.

\begin{corollary}\label{cor4.2}
Let $t\in \mathds{R}$. Then,
\begin{equation*}
    B_n(t;0)=\sum_{k=0}^n \binom{-t}{k}\binom{n+t}{n-k}\dfrac{S(n+k,k)}{\binom{n+k}{n}}.
\end{equation*}
\end{corollary}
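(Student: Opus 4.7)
The plan is to deduce Corollary~\ref{cor4.2} as the $m=1$ specialization of Corollary~\ref{cor4.1}, using the probabilistic representation of the classical Stirling numbers given in \eqref{eq2.12star} to simplify $C(1,n,k)$.

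First I would note that when $m=1$, the beta density $\rho_1(\theta)=1$ on $[0,1]$ from \eqref{eq3.25}, so $\beta_1 = U$ is uniformly distributed on $[0,1]$. Consequently the partial sums $S_k$ in \eqref{eq1.7} associated with $Y=\beta_1$ coincide in law with $U_1+\cdots+U_k$, where $(U_j)_{j\geq 1}$ are the i.i.d.\ uniforms from Section~\ref{s2}. Hence
\begin{equation*}
C(1,n,k) = \mathds{E}S_k^n = \mathds{E}\left(U_1+\cdots+U_k\right)^n.
\end{equation*}

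Next I would invoke the probabilistic formula \eqref{eq2.12star} for the classical Stirling numbers, but with the roles of the indices shifted: replacing $n$ by $n+k$ and $m$ by $k$ in \eqref{eq2.12star} yields
\begin{equation*}
S(n+k,k) = \binom{n+k}{k}\,\mathds{E}(U_1+\cdots+U_k)^n,
\end{equation*}
so that
\begin{equation*}
C(1,n,k) = \frac{S(n+k,k)}{\binom{n+k}{k}} = \frac{S(n+k,k)}{\binom{n+k}{n}}.
\end{equation*}

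Finally I would substitute this identity into Corollary~\ref{cor4.1} at $m=1$, which immediately gives the claimed expression for $B_n(t;0)$. There is no real obstacle here; the only point requiring a moment of care is the index shift in \eqref{eq2.12star}, and recognizing that $\beta_1\stackrel{d}{=}U$ so that the probabilistic representation of the ordinary Stirling numbers applies directly to $\mathds{E}S_k^n$.
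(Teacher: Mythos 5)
Your proposal is correct and follows essentially the same route as the paper: both reduce to the second equality in \eqref{eq1.8} (your Corollary~\ref{cor4.1} at $m=1$ is exactly that specialization for $Y=\beta_1$) and both obtain $\mathds{E}S_k^n=S(n+k,k)/\binom{n+k}{n}$ from the probabilistic representation \eqref{eq2.12star} via the same index shift. Nothing is missing.
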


\begin{proof}
The result follows from the second equality in \eqref{eq1.8} by taking into account the following fact. Since $Y=\beta_1$ has the uniform distribution on $[0,1]$, we have from \eqref{eq1.7} and \eqref{eq2.12star}
\begin{equation*}
    \mathds{E}S_k^n=\dfrac{S(n+k,k)}{\binom{n+k}{n}}.
\end{equation*}
The proof is complete.
\end{proof}

Corollary~\ref{cor4.2} was already obtained by Todorov \cite[Eq. (3)]{Tod1985} using different techniques (see also Srivastava and Todorov \cite{SriTod1988}). A different representation for the Bernoulli numbers in terms of the Stirling numbers of the second kind has been provided by Guo and Qi \cite[Theorem 3.1]{GuoQi2014}. For $t\in \mathds{N}$, a similar formula to that in Corollary~\ref{cor4.2} has been obtained by Kim \textit{et al.} \cite{KimKimDolRim2013} using umbral calculus techniques. For generalized Apostol-Bernoulli polynomials of integer order, we refer the read to Luo and Srivastava \cite{LuoSri2005}.

Let $t\in \mathds{R}$ and $0\leq \beta \leq 1$. We consider the generalized Apostol-Euler polynomials $E(t,\beta;x)$ of real order $t$ defined via its generating function as
\begin{equation}\label{eq3.33}
G(E(t,\beta;x),z)=\dfrac{e^{xz}}{(1+\beta (e^z-1))^t}.
\end{equation}
The Apostol-Euler polynomials and the classical Euler polynomials are defined, respectively, as
\begin{equation*}
E(\beta;x)=E(1,\beta;x),\qquad E(x)=E(1/2;x).
\end{equation*}
Up to a constant factor, the polynomials $E(t,\beta;x)$ were introduced by Luo \cite{Luo2006} and subsequently investigated by many authors (see, for instance, Wang \textit{et al.} \cite{WanJiaWan2008}, Luo \cite{Luo2010}, and Tremblay \textit{et al.} \cite{TreGabFug2011}, among others).

Let $X(\beta)$ be a random variable having the Bernoulli law
\begin{equation}\label{eq4.13star}
    P(X(\beta)=1)=1-P(X(\beta)=0)=\beta.
\end{equation}
Using  \eqref{eq4.13star}, we rewrite \eqref{eq3.33} as
\begin{equation}\label{eq4.8star}
    G(E(t,\beta;x),z)=\dfrac{e^{xz}}{\left ( \mathds{E}e^{zX(\beta)}\right )^t},
\end{equation}
thus showing that $E(t,\beta;x)\in \mathcal{E}_t(X(\beta))$.

\begin{corollary}\label{cor4.3}
Let $t\in \mathds{R}$ and $0\leq \beta \leq 1$. Then,
\begin{equation*}
    E_n(t,\beta;0)=\sum_{m=0}^n\binom{-t}{m}\beta^m m! S(n,m).
\end{equation*}
\end{corollary}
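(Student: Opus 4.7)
The plan is to apply Theorem~\ref{th1} directly, using the identification $E(t,\beta;x)\in\mathcal{E}_t(X(\beta))$ already established in \eqref{eq4.8star}. What remains is to identify the probabilistic Stirling numbers $S_{X(\beta)}(n,m)$ in closed form, and for this the generating function characterization \eqref{eq2.9star} is the most efficient route.

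First I would compute the moment generating function of $Y=X(\beta)$ directly from \eqref{eq4.13star}:
\begin{equation*}
\mathds{E}e^{zX(\beta)}=(1-\beta)+\beta e^z,\qquad \text{so}\qquad \mathds{E}e^{zX(\beta)}-1=\beta(e^z-1).
\end{equation*}
Substituting this into \eqref{eq2.9star} gives
\begin{equation*}
\sum_{n=m}^\infty \dfrac{S_{X(\beta)}(n,m)}{n!}z^n = \dfrac{(\beta(e^z-1))^m}{m!} = \beta^m\sum_{n=m}^\infty \dfrac{S(n,m)}{n!}z^n,
\end{equation*}
where in the last step I use the classical generating function for $S(n,m)$, which is just the special case $Y=1$ of \eqref{eq2.9star}. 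Matching coefficients of $z^n/n!$ yields the clean identity $S_{X(\beta)}(n,m)=\beta^m S(n,m)$.

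Finally, feeding this into the first equality of \eqref{eq1.8} in Theorem~\ref{th1} with $Y=X(\beta)$ and $t$ unchanged gives
\begin{equation*}
E_n(t,\beta;0)=\sum_{m=0}^n\binom{-t}{m}m!\,S_{X(\beta)}(n,m)=\sum_{m=0}^n\binom{-t}{m}\beta^m m!\,S(n,m),
\end{equation*}
which is exactly the claim. There is no serious obstacle: the only point that requires a small argument is the evaluation of $S_{X(\beta)}(n,m)$, and once the observation $\mathds{E}e^{zX(\beta)}-1=\beta(e^z-1)$ is made, the factor $\beta^m$ pops out of the generating function and the rest is bookkeeping. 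One could alternatively compute $\mathds{E}S_k^n$ directly (noting that $S_k$ is Binomial$(k,\beta)$) and substitute into the second equality of \eqref{eq1.8}, but this route requires re-deriving the identity $\sum_k \binom{k}{\ell}\beta^\ell(1-\beta)^{k-\ell}\ell^n$-type sums and is strictly more cumbersome than using \eqref{eq2.9star}.
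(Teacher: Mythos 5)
Your proposal is correct and follows essentially the same route as the paper: compute $\mathds{E}e^{zX(\beta)}-1=\beta(e^z-1)$, read off $S_{X(\beta)}(n,m)=\beta^m S(n,m)$ from the generating function \eqref{eq2.9star}, and apply the first equality in \eqref{eq1.8}. No gaps.
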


\begin{proof}
Since
\begin{equation*}
    \mathds{E}e^{zX(\beta)}-1=\beta(e^z-1),
\end{equation*}
formula \eqref{eq2.9star} immediately yields
\begin{equation*}
    S_{X(\beta)}(n,m)=\beta^m S(n,m),\quad n,m\in \mathds{N}_0,\quad m\leq n.
\end{equation*}
Thus, the result follows from the first equality in \eqref{eq1.8}.
\end{proof}

When $t$ is a positive integer, Corollary~\ref{cor4.3} has also been shown by Kim \textit{et al.} \cite{KimKimDolRim2013} by using umbral calculus. A similar, but more involved formula than that in Corollary~\ref{cor4.3}, was obtained by Luo \cite{Luo2006}.

Let $t\in \mathds{R}$ and $0\leq \beta \leq 1$. Assume that $Y$ is a random variable uniformly distributed on $[0,1]$ and independent of $X(\beta)$, as defined in \eqref{eq4.13star}. Observe that
\begin{equation}\label{eq4.16star}
    \mathds{E}e^{zX(\beta)Y}=\beta \mathds{E}e^{zY}+1 -\beta=\beta \dfrac{e^z-1}{z}+1-\beta.
\end{equation}
We finally consider the polynomials $B^*(t,\beta;x)$ defined by means of the generating function
\begin{equation}\label{eq4.9star}
    G(B^*(t,\beta;x),z)=\dfrac{e^{xz}}{\left ( \mathds{E}e^{zX(\beta)Y}\right )^t}=\left ( \dfrac{z}{\beta (e^z-1)+(1-\beta)z}\right )^t e^{xz}.
\end{equation}
Note that $B^*(t,\beta;x)\in \mathcal{E}_t(X(\beta)Y)$ and that $B^*(t,1;x)=B(t,x)$. As far as we know, the polynomials $B^*(t,\beta;x)$ are introduced here for the first time. For such polynomials, we give the following explicit expression.

\begin{corollary}\label{cor4.4}
Let $t\in \mathds{R}$ and $0\leq \beta \leq 1$. Then,
\begin{equation*}
    B^*(t,\beta;0)=\sum_{m=0}^n \binom{-t}{m}\beta^m \sum_{k=0}^m \binom{m}{k}(-1)^{m-k}\dfrac{S(n+k,k)}{\binom{n+k}{n}}.
\end{equation*}
\end{corollary}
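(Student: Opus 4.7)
The plan is to apply Theorem~\ref{th1} to the random variable $Z=X(\beta)Y$ and then reduce the resulting generalized Stirling numbers $S_Z(n,m)$ back to the uniform case already handled in Corollary~\ref{cor4.2}. By \eqref{eq4.9star} we have $B^*(t,\beta;x)\in \mathcal{E}_t(X(\beta)Y)$, so the first equality in \eqref{eq1.8} gives
\begin{equation*}
B^*_n(t,\beta;0)=\sum_{m=0}^n \binom{-t}{m} m!\, S_{X(\beta)Y}(n,m).
\end{equation*}
Thus everything reduces to identifying $S_{X(\beta)Y}(n,m)$.

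The key observation is that the centered moment generating function of $X(\beta)Y$ factors through $\beta$. Indeed, since $X(\beta)$ is Bernoulli with parameter $\beta$ and is independent of $Y$, a one-line computation using \eqref{eq4.16star} shows
\begin{equation*}
\mathds{E}e^{zX(\beta)Y}-1=\beta\bigl(\mathds{E}e^{zY}-1\bigr).
\end{equation*}
Raising this to the $m$th power, dividing by $m!$, and appealing to the generating function characterization \eqref{eq2.9star} applied on both sides (for $X(\beta)Y$ on the left and for $Y$ on the right), I obtain, by matching coefficients of $z^n/n!$,
\begin{equation*}
S_{X(\beta)Y}(n,m)=\beta^m S_Y(n,m).
\end{equation*}

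Finally, using the defining formula \eqref{eq1.1star} for $S_Y(n,m)$ together with the identity
\begin{equation*}
\mathds{E}S_k^n=\dfrac{S(n+k,k)}{\binom{n+k}{n}},
\end{equation*}
which was established in the proof of Corollary~\ref{cor4.2} (being $Y$ uniform on $[0,1]$, this follows from \eqref{eq1.7} and the probabilistic representation \eqref{eq2.12star} of the classical Stirling numbers of the second kind), we get
\begin{equation*}
m!\,S_{X(\beta)Y}(n,m)=\beta^m\sum_{k=0}^m \binom{m}{k}(-1)^{m-k}\dfrac{S(n+k,k)}{\binom{n+k}{n}}.
\end{equation*}
Substituting back into the expression for $B^*_n(t,\beta;0)$ yields the announced formula.

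No step here is really an obstacle: the only substantive point is the factorization $\mathds{E}e^{zX(\beta)Y}-1=\beta(\mathds{E}e^{zY}-1)$, which propagates at the generating function level into the clean scaling $S_{X(\beta)Y}(n,m)=\beta^m S_Y(n,m)$. Everything else is an invocation of Theorem~\ref{th1} and of the uniform-case moment identity already used for the Bernoulli polynomials.
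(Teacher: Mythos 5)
Your proposal is correct and follows essentially the same route as the paper: apply the first equality in \eqref{eq1.8} to $X(\beta)Y$, derive $S_{X(\beta)Y}(n,m)=\beta^m S_Y(n,m)$ from the factorization $\mathds{E}(e^{zX(\beta)Y}-1)=\beta\,\mathds{E}(e^{zY}-1)$ via \eqref{eq2.9star}, and evaluate $\mathds{E}S_k^n$ for the uniform $Y$ through \eqref{eq2.12star}. The only cosmetic difference is that you cite \eqref{eq1.1star} where the paper cites \eqref{eq2.8star}, which are equivalent by Proposition~\ref{prop2.2}.
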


\begin{proof}
As follows from \eqref{eq4.16star}, we have
\begin{equation*}
    \mathds{E}\left ( e^{zX(\beta)Y}-1\right )=\beta\mathds{E}\left ( e^{zY}-1\right ).
\end{equation*}
Thus, formula \eqref{eq2.9star} gives us
\begin{equation}\label{eq4.14star}
    S_{X(\beta)Y}(n,m)=\beta^m S_Y(n,m),\quad n,m\in \mathds{N}_0,\quad m\leq n.
\end{equation}
On the other hand, since $Y$ is uniformly distributed on $[0,1]$, we have from \eqref{eq2.8star} and \eqref{eq2.12star}
\begin{equation}\label{eq4.15star}
\begin{split}
    m! S_Y(n,m)&=\sum_{k=0}^m \binom{m}{k}(-1)^{m-k}\mathds{E}S_k^n\\
    &=\sum_{k=0}^m \binom{m}{k}(-1)^{m-k}\dfrac{S(n+k,k)}{\binom{n+k}{n}}.
    \end{split}
\end{equation}
Therefore, the conclusion follows from \eqref{eq4.14star}, \eqref{eq4.15star}, and the first equality in \eqref{eq1.8}
\end{proof}

%\section*{Acknowledgements}

%The authors thank the referee for his/her careful reading of the manuscript and for his/her suggestions, which improved the final presentation.

\section*{References}

\bibliographystyle{elsarticle-num}
\bibliography{AdellLekuonaArXiv2017}

\end{document}